\numberwithin{equation}{section}
\newtheorem{theo}{Theorem}[section]
\newtheorem{coro}[theo]{Corollary}
\newtheorem{lemm}[theo]{Lemma}
\newtheorem{exam}[theo]{Example}
\newtheorem{assu}[theo]{Assumption}
\newcommand{\coref}[1]{c_{\eqref{#1}}}
\newcommand{\sptext}[3]{\hspace{#1 em}\mbox{#2}\hspace{#3 em}}
\newcommand{\od}{\mathrm{d}}
\newcommand{\supp}{{\rm supp}}
\newcommand{\B}{{\mathbb{B}}}
\newcommand{\D}{{\mathbb{D}}}
\newcommand{\E}{{\mathbb{E}}}
\renewcommand{\P}{{\mathbb{P}}}
\newcommand{\R}{{\mathbb{R}}}
\newcommand{\cI}{\mathcal I}
\newcommand{\cF}{{\mathcal{F}}}
\newcommand{\vare}{\varepsilon}
\newcommand{\BMO}{\mathrm{BMO}}
\newcommand{\Lip}{\rm Lip}
\newcommand{\hoelO}[1]{{\mbox{\rm H\"ol}}^0_{#1}(\R)}
\newcommand{\hoeldO}[1]{{\mbox{\rm H\"ol}}^0_{#1}(\R^d)}
\begin{document}

\title[On sharp embeddings]{On the sharpness of embeddings of H\"older spaces into Gaussian Besov spaces}
	
\author{Stefan Geiss}
\address{Department of Mathematics and Statistics, P.O.Box 35, FI-40014 University of Jyv\"as\-kyl\"a, Finland}
\email{stefan.geiss@jyu.fi}

\thanks{The author was supported by the Project 298641 \textit{'Stochastic Analysis and Nonlinear Partial Differential Equations,
        Interactions and Applications'} of the Academy of Finland.}        

\subjclass[2010]{
Primary 
46B70, 
46E35; 
Secondary 
60H07, 
26A33
}

\keywords{Real interpolation, H\"older spaces, Gaussian Besov spaces}

\begin{abstract}
For an interpolation pair $(E_0,E_1)$ of Banach spaces with $E_1 \hookrightarrow E_0$  we use vectors 
$b_1,b_2,\ldots \in E_1$ that satisfy an extremal property with respect to the $J$- and $K$-functional to construct
sub-spaces that are isometric to $\ell_q^{(\theta)}$. The construction is based on a randomisation using
independent Rademacher variables. We verify that systems obtained by re-scaling a function with a certain periodicity 
property share this extreme property. This implies the sharpness of natural embeddings of H\"older spaces obtained by 
the real interpolation into the corresponding Gaussian Besov spaces.
\end{abstract}
\maketitle
\today


\section{Introduction}

Let us start with an abstract problem:
Assume two interpolation pairs $(F_0,F_1)$ and $(E_0,E_1)$ and a linear operator $T:F_0+F_1\to E_0+E_1$ such that 
$T:F_i\to E_i$ are continuous for $i=0,1$. Find sufficient conditions such that the embedding $T:(F_0,F_1)\to (E_0,E_1)$ is {\em sharp with 
respect to the real interpolation method} in the sense that  
\[ T((F_0,F_1)_{\theta,q_1}) \not \subseteq  (E_0,E_1)_{\theta,q_0} \]
for all $\theta \in (0,1)$ and $1 \le q_0 < q_1 \le \infty$.
A first typical example that does not share this type of sharpness is obtained by the re-iteration theorem:
\smallskip

\begin{exam}\label{statement:index_shift_by_re-iteration}
\rm
Assume that $F_1 \hookrightarrow F_0$ and let $E_0 := (F_0,F_1)_{\eta,p}$ and $E_1:=F_1$, where $(\eta,p)\in (0,1)\times [1,\infty]$ 
is fixed. Then one has $E_0+E_1=(F_0,F_1)_{\eta,p}$ and $F_0+F_1= F_0$. Let $T:E_0+E_1\to F_0+F_1$ be the natural embedding.
By the re-iteration theorem we have for $(\theta,q)\in (0,1)\times [1,\infty]$  that
\[ (E_0,E_1)_{\theta,q} = ((F_0,F_1)_{\eta,p},F_1)_{\theta,q} 
                        = (F_0,F_1)_{(1-\theta)\eta+ \theta,q}. \]
This implies that 
\[ T( (E_0,E_1)_{\theta,\infty}) = (F_0,F_1)_{(1-\theta)\eta+ \theta,\infty}
\subseteq (F_0,F_1)_{\theta,1} \]
with $(1-\theta)\eta+ \theta > \theta$.
\end{exam}
\smallskip

\cref{statement:index_shift_by_re-iteration} relies on the fact that the main interpolation parameter $\theta$ is already shifted.
An example that relies only on the shift of the fine-tuning parameter $q$ is the following:

\begin{exam}\rm
We let $a=(\alpha_n)_{n=1}^\infty \in \ell_r$ for some fixed $r\in [1,\infty)$.
Then we get a multiplier on the spaces $\ell_q^{(s)}$, that are recalled in \eqref{eqn:definition_ellp(s)}, by
\[ M_a : \ell_{q_i}^{(i)} \to  \ell_{p_i}^{(i)}
   \sptext{1}{defined by}{1}
   M_a ((x_n)_{n=1}^\infty) := (\alpha_n x_n)_{n=1}^\infty \]
for $i=0,1$ and $\frac{1}{p_i} = \frac{1}{q_i}+\frac{1}{r}$, 
$1 \le p_i \le q_i,r \le \infty$. According to \eqref{eqn:interpolation_ellp(s)}
real interpolation with parameters $(\theta,q)\in (0,1)\times [1,\infty]$ gives 
\[  M_a : \ell_q^{(\theta)} \to  \ell_q^{(\theta)}. \]
Choosing $\alpha_n := n^{-\kappa}$ with $\kappa>1/r$, one does not have a continuous map
\[  M_a : \ell_1^{(\theta)} \to  \ell_\infty^{(\eta)} \]
for any $0<\theta < \eta <1$. But still, one has the improvement on the fine-tuning level
\[  M_a : \ell_q^{(\theta)} \to  \ell_p^{(\theta)}
    \sptext{1}{with}{1} 1/p=1/q + 1/r, \]
which implies $p<q$ and corresponds to the map
\[  M_a : (\ell_{q_0}^{(0)},\ell_{q_1}^{(1)})_{\theta,q} \to (\ell_{q_0}^{(0)},\ell_{q_1}^{(1)})_{\theta,p}. \]
\end{exam}
\bigskip

As an application of our general result \cref{statement:random_basis_of_extreme_elements} we discuss the sharpness of the embedding 
\[ (C_b^0(\R^d),\Lip^0(\R^d))_{\theta,q} \hookrightarrow (L_2(\R^d,\gamma_d),\D_{1,2}(\R^d,\gamma_d))_{\theta,q}, \]
where $\gamma_d$ is the standard Gaussian measure on $\R^d$.
Starting with $d=1$ a typical example of a H\"older continuous function, $f(x):= (x^+)^\theta$, one realizes that 
\[ f\in \D_{1,2}(\R,\gamma_1) 
   \sptext{1}{if}{1} \theta \in \left (\frac{1}{2}, 1\right ], \]
so that this type of example does not work for us. 
The reason is that the H\"older space is much more sensitive to the singularity at $x=0$ than the Gaussian Besov space is. 
At least there are  two constructions in the literature to overcome this obstacle.
Firstly, a problem discussed in \cite{Laukkarinen:20} within the context of L\'evy processes, which is not directly related to ours, 
gives a hint how to overcome this obstacle: Instead of taking  $x\mapsto (x^+)^\theta$ one should use appropriate 
combinations of Schauder functions that have more singularities which can be seen by the Gaussian Besov spaces. 
Secondly, there exist characterisations for  Besov- and Triebel-Lizorkin-spaces, $B_{p,q}^s(\R^d,w)$ and $F_{p,q}^s(\R^d,w)$, where 
$w$ is a polynominal weight,  that are based on Wavelet constructions \cite{Haroske:Triebel:05}. 
On the one hand, our weight is Gaussian and therefore does not fall into the setting of polynomial weights.
Secondly, although our construction  in \cref{sec:application_Gaussian_Besov} is based on the principal idea of wavelets,
we do not need  the property that we have a basis so that only very mild assumptions on our construction are required.

Our main contribution is that we lift our arguments to a  construction for interpolation spaces between general Banach spaces,
where we choose elements sharing an extremal relation regarding the $J$- and $K$-functional and a  
randomisation. This construction might be useful for other problems too.


\section{Preliminaries}


\subsection{Spaces}
Given a function $f:\R^d\to \R$, we denote by $|f|_{\rm Lip}:= \sup_{x\not = y} |f(x)-f(y)|/|x-y|$ its Lipschitz constant, where
$|x|$ is the  euclidean norm  on $\R^d$.
We denote by $\Lip^0(\R^d)$ the space of all Lipschitz functions $f:\R^d\to \R$ such that $f(0)=0$  which becomes a normed 
space under $| \cdot |_{\Lip}$. The space of continuous bounded functions $f:\R^d\to \R$ vanishing at zero is denoted by $C_b^0(\R^d)$ 
and is equipped with the norm $\| f\|_{C_b^0(\R^d)}:= \sup_{x\in \R^d} |f(x)|$. 
For $(s,q)\in \R\times [1,\infty]$ we use the Banach space
\begin{equation}\label{eqn:definition_ellp(s)}
 \ell_q^{(s)} := \{ x=(x_n)_{n=1}^\infty \subseteq \R : \| x\|_{\ell_q^{(s)}} := \| (2^{sn}x_n)_{n=1}^\infty \|_{\ell_q}<\infty\}. 
\end{equation}
In this note we only consider real Banach spaces. Given Banach spaces $E_0,E_1$, the notation $E_1 \hookrightarrow E_0$ stands for an 
injective continuous embedding, so that there is a $c>0$ such that $\| x\|_{E_0} \le c \|x\|_{E_1}$ for all $x\in E_1$.
Moreover, we use the notation $A\sim_c B$, where $A,B\ge 0$ and $c\ge 1$, if $(1/c) A \le B \le c A$.


\subsection{Real interpolation}
Let $(E_0,E_1)$ be a couple of Banach spaces such that $E_0$ and $E_1$ are continuously embedded into 
some topological Hausdorff space $X$. 
We equip $E_0+E_1:= \{ x = x_0+x_1 : x_i \in E_i\}$ with the norm 
$\|x\|_{E_0+E_1} := \inf \{  \|x_0\|_{E_0}+\|x_1\|_{E_1}  : x_i\in E_i, x=x_0+x_1\}$
and $E_0 \cap E_1$ with the norm $\|x\|_{E_0\cap E_1} := \max \{ \|x\|_{E_0}, \|x\|_{E_1}\}$ to get Banach spaces 
$E_0\cap E_1 \subseteq E_0+E_1$.
For $x\in E_0\cap E_1$ and $x\in E_0+E_1$, respectively, and $\lambda \in (0,\infty)$ we define the $J$-and $K$-functional
\begin{align*}
J(\lambda,x;E_0,E_1) & := \max \{ \|x\|_{E_0},\lambda \|x\|_{E_1} \},\\
K(\lambda,x;E_0,E_1) & := \inf \{ \|x_0\|_{E_0} + \lambda \|x_1\|_{E_1} : x = x_0+x_1 \}.
\end{align*}
Given $(\theta,q)\in (0,1)\times [1,\infty]$ we obtain real interpolation spaces by
\begin{align*}
(E_0,E_1)_{\theta,q,J}  & := \left \{ x \in E_0+E_1 :  \|x\|_{(E_0,E_1)_{\theta,q,J}} < \infty  \right \},\\
(E_0,E_1)_{\theta,q,K}  & := \left \{ x \in E_0+E_1 :  \|x\|_{(E_0,E_1)_{\theta,q,K}} < \infty  \right \},
\end{align*}
with
\begin{align*}
     \|x\|_{(E_0,E_1)_{\theta,q,J}} 
& := \inf \Big \{  \left \| (2^{-k\theta} J(2^k,x_k;E_0,E_1))_{k=-\infty}^\infty \right \|_{\ell_q} : \\ 
& \hspace*{10em} 
     x = \sum_{k=-\infty}^\infty x_k \mbox{ in } E_0+E_1, x_k \in E_0\cap E_1 \Big \},\\
     \|x\|_{(E_0,E_1)_{\theta,q,K}} 
& := \left \| \lambda \mapsto  \lambda^{-\theta} K(\lambda,x;E_0,E_1) 
     \right \|_{L_q\left ((0,\infty), \frac{\od \lambda}{\lambda}\right )}.
\end{align*}
By \cite[Lemma 3.2.3, Theorem 3.3.1]{Bergh:Loefstroem:76} for all $(\theta,q)\in (0,1)\times [1,\infty]$ there is a 
constant $\coref{eqn:J_K_equivalence}=c(\theta,q) \ge 1$ such that
\begin{equation}\label{eqn:J_K_equivalence}
   \frac{1}{\coref{eqn:J_K_equivalence}} \|x\|_{(E_0,E_1)_{\theta,q,K}} \le \|x\|_{(E_0,E_1)_{\theta,q,J}} \le \coref{eqn:J_K_equivalence} \|x\|_{(E_0,E_1)_{\theta,q,K}}.
\end{equation}
Therefore one defines the two-parameter scale of real interpolation spaces 
\[ (E_0,E_1)_{\theta,q}:=(E_0,E_1)_{\theta,q,K}=(E_0,E_1)_{\theta,q,J}\]
and equip $(E_0,E_1)_{\theta,q}$ with the norm $\|\cdot\|_{(E_0,E_1)_{\theta,q}}:= \|\cdot\|_{(E_0,E_1)_{\theta,q,K}}$.
We obtain a family of Banach spaces $\left (  (E_0,E_1)_{\theta,q},\|\cdot\|_{(E_0,E_1)_{\theta,q}} \right )$ with the  lexicographical 
ordering
\begin{equation}\label{eqn:embedding_interpolation_same_theta}
(E_0,E_1)_{\theta,q_0} \subseteq (E_0,E_1)_{\theta,q_1} \sptext{1}{for all}{1} \theta \in (0,1)
\sptext{.5}{and}{.5} 1\leqslant  q_0 < q_1 \leqslant \infty, 
\end{equation}
and, under the additional assumption that $E_1 \hookrightarrow E_0$,  
\[ (E_0,E_1)_{\theta_0,q_0} \subseteq (E_0,E_1)_{\theta_1,q_1}
   \sptext{1}{for all}{1}
   0< \theta_1 < \theta_0<1 \sptext{.5}{and}{.5} q_0,q_1\in [1,\infty]. \]
For more information about interpolation theory the reader is referred to 
\cite{Bennet:Sharpley:88,Bergh:Loefstroem:76,Triebel:78}.


\subsection{Examples of interpolation spaces.}
For  $s_0 \not = s_1$ and $q_0,q_1,q\in [1,\infty]$ the spaces $\ell_q^{(s)}$ interpolate as
\begin{equation}\label{eqn:interpolation_ellp(s)}
 (\ell_{p_0}^{(s_0)},\ell_{p_1}^{(s_1)})_{\theta,q}
   = \ell_q^{(s)} 
   \sptext{1}{with}{1}
   s = (1-\theta) s_0 + \theta s_1,
\end{equation}
where the norms are equivalent up to multiplicative constants depending at most
on $s_0,s_1,q_0,q_1,q,\theta$ (see \cite[Theorem 5.6.1]{Bergh:Loefstroem:76}). Regarding the H\"older spaces, we use 
\[ \hoeldO{\theta,q} := (C_b^0(\R^d),\Lip^0(\R^d))_{\theta,q}. \]


\section{A general result}

Let $(M, \Sigma, \rho)$ be a probability space and $\varepsilon_n \colon M \to \{-1, 1\}$ be a sequence  of i.i.d. random variables with
$\rho(\varepsilon_n = \pm 1) = \frac{1}{2}$.

\begin{theo}
\label{statement:random_basis_of_extreme_elements}
Assume Banach spaces $(E_0,E_1)$ with $E_1\hookrightarrow E_0$, 
$(b_n)_{n=1}^\infty\subseteq E_1$, and $\delta,\kappa>0$ such that
\begin{equation}\label{eqn:J-K-extreme_points} 
\delta \le K(2^{-n},b_n;E_0,E_1) \le J(2^{-n}, b_n;E_0,E_1) \le \kappa
\end{equation}
for $n=1,2,\ldots$ Then for all $(\theta,q)\in (0,1)\times [1,\infty)$, there is a constant
$\coref{eqn:statement:random_basis_of_extreme_elements}=\coref{eqn:statement:random_basis_of_extreme_elements}(\theta,q,\kappa,\delta)>0$
such that for all $(\alpha_n)_{n=1}^\infty\subset \R $
such that $\# \{ n\ge 1 : \alpha_n \not = 0 \} < \infty$ one has
\begin{multline}\label{eqn:statement:random_basis_of_extreme_elements}
        \frac{1}{\coref{eqn:statement:random_basis_of_extreme_elements}} \left \| (\alpha_n)_{n=1}^\infty \right \|_{\ell_q^{(\theta)}} 
    \le  \int_M \left \|  \sum_{n=1}^\infty \alpha_n \vare_n(\xi) b_n  \right \|_{(E_0,E_1)_{\theta,q}} \rho(\od \xi)  \\
    \le  \sup_{\vare_n = \pm 1} \left \|  \sum_{n=1}^\infty \alpha_n \vare_n b_n \right \|_{(E_0,E_1)_{\theta,q}} 
    \le  \coref{eqn:statement:random_basis_of_extreme_elements}  \left \| (\alpha_n)_{n=1}^\infty \right \|_{\ell_q^{(\theta)}}.
\end{multline}
\end{theo}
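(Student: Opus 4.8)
The plan is to establish the three inequalities in \eqref{eqn:statement:random_basis_of_extreme_elements} separately. The middle inequality is immediate: since $\rho$ is a probability measure, the integrand is bounded pointwise in $\xi$ by $\sup_{\vare_n=\pm1}\|\sum_n\alpha_n\vare_n b_n\|_{(E_0,E_1)_{\theta,q}}$, so the average cannot exceed the supremum. The two outer bounds use the $J$- and $K$-descriptions of the interpolation norm respectively, and only the lower bound will actually need the randomisation.

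For the upper bound I would use the $J$-method together with the single ``diagonal'' decomposition that places each $b_n$ at its own scale. Given a finitely supported $(\alpha_n)$ and \emph{any} fixed signs $\vare_n=\pm1$, set $x_{-n}:=\alpha_n\vare_n b_n$ for $n\ge1$ and $x_k:=0$ otherwise; since $E_1\hookrightarrow E_0$ we have $b_n\in E_0\cap E_1$, and $\sum_k x_k=\sum_n\alpha_n\vare_n b_n$ is a finite sum. By homogeneity of $J$ and the right-hand inequality in \eqref{eqn:J-K-extreme_points},
\[ J(2^{-n},x_{-n};E_0,E_1)=|\alpha_n|\,J(2^{-n},b_n;E_0,E_1)\le\kappa|\alpha_n|. \]
Inserting this decomposition into the definition of $\|\cdot\|_{(E_0,E_1)_{\theta,q,J}}$ gives the estimate $\kappa\|(\alpha_n)\|_{\ell_q^{(\theta)}}$, uniformly in the signs; converting to the $K$-norm through \eqref{eqn:J_K_equivalence} bounds the supremum over signs by $\coref{eqn:J_K_equivalence}\kappa\|(\alpha_n)\|_{\ell_q^{(\theta)}}$.

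For the lower bound I would work with the continuous $K$-functional norm and push the integral over $M$ inside. Writing $x(\xi):=\sum_n\alpha_n\vare_n(\xi)b_n$, Minkowski's integral inequality (valid since $1\le q<\infty$ and the integrand is nonnegative) yields
\[ \int_M\|x(\xi)\|_{(E_0,E_1)_{\theta,q,K}}\rho(\od\xi)\ge\left(\int_0^\infty\Big(\lambda^{-\theta}\int_M K(\lambda,x(\xi);E_0,E_1)\,\rho(\od\xi)\Big)^q\frac{\od\lambda}{\lambda}\right)^{1/q}. \]
The decisive step is to bound $g(\lambda):=\int_M K(\lambda,x(\xi);E_0,E_1)\rho(\od\xi)$ from below at every scale $\lambda$ simultaneously. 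For fixed $\lambda$ the map $K(\lambda,\cdot;E_0,E_1)$ is a norm on $E_0+E_1$, and the summands $\alpha_n\vare_n b_n$ are independent and symmetric, hence centred; conditioning on $\vare_m$ and applying Jensen's inequality to the remaining mean-zero randomness gives, for each $m$,
\[ g(\lambda)\ge K(\lambda,\alpha_m b_m;E_0,E_1)=|\alpha_m|\,K(\lambda,b_m;E_0,E_1). \]
This convexity argument is precisely what replaces, and avoids, a direct expansion of $\|x(\xi)\|$ into interfering cross terms.

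Finally I would reassemble the full $\ell_q^{(\theta)}$ norm from these scale-wise estimates. Since $K(\cdot,b_m;E_0,E_1)$ is nondecreasing and $K(2^{-m},b_m;E_0,E_1)\ge\delta$ by the left-hand inequality in \eqref{eqn:J-K-extreme_points}, we have $g(\lambda)\ge\delta|\alpha_m|$ on the dyadic interval $I_m:=[2^{-m},2^{-m+1}]$. The $(I_m)_{m\ge1}$ are pairwise disjoint, so restricting the $\lambda$-integral to $\bigcup_{m\ge1}I_m$ and using $\lambda^{-\theta}\ge2^{(m-1)\theta}$ on $I_m$ gives
\[ \int_0^\infty\big(\lambda^{-\theta}g(\lambda)\big)^q\frac{\od\lambda}{\lambda}\ge(\ln2)\,2^{-\theta q}\delta^q\sum_{m\ge1}\big(2^{m\theta}|\alpha_m|\big)^q=(\ln2)\,2^{-\theta q}\delta^q\|(\alpha_n)\|_{\ell_q^{(\theta)}}^q. \]
Taking $q$-th roots produces the lower bound with constant $2^{-\theta}(\ln2)^{1/q}\delta$, and setting $\coref{eqn:statement:random_basis_of_extreme_elements}:=\max\{\coref{eqn:J_K_equivalence}\kappa,\,2^{\theta}(\ln2)^{-1/q}\delta^{-1}\}$ finishes the argument. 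I expect the main obstacle to be exactly this last reconstruction: the centring lemma applied to a single coordinate only controls the $\ell_\infty^{(\theta)}$-norm, and recovering the genuine $\ell_q^{(\theta)}$-norm for $q<\infty$ forces one to estimate $g(\lambda)$ across all scales and integrate, rather than at one distinguished scale per coordinate.
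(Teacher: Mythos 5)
Your proof is correct and follows the same overall architecture as the paper: the diagonal $J$-decomposition $x_{-n}=\alpha_n\vare_n b_n$ for the upper bound, and for the lower bound a dyadic splitting of the $K$-functional integral combined with the Jensen/symmetry argument that extracts the single coordinate $|\alpha_m|K(\lambda,b_m;E_0,E_1)$ from the randomised sum at each scale. The one genuine difference is how you pass the average over $M$ through the $L_q(\od\lambda/\lambda)$-norm: the paper first upgrades the $L_1(\rho)$-average to an $L_q(\rho)$-average via the Khintchine--Kahane inequality and then uses Fubini, whereas you apply Minkowski's integral inequality directly to $\int_M\|x(\xi)\|_{(E_0,E_1)_{\theta,q,K}}\rho(\od\xi)$. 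Your route is slightly more elementary and yields the explicit constant $2^{-\theta}(\ln 2)^{1/q}\delta$ free of the Kahane constant $c_q$; the paper's route is the one that generalises if one wants $L_p(\rho)$-moments of the interpolation norm rather than just the first moment. Both correctly exploit that $K(\lambda,\cdot\,;E_0,E_1)$ is a norm on $E_0+E_1$ and is nondecreasing in $\lambda$, which is exactly what makes the scale-by-scale lower bound $g(\lambda)\ge\delta|\alpha_m|$ on $[2^{-m},2^{-m+1}]$ work.
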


\begin{proof}
Using \eqref{eqn:J_K_equivalence} one has 
\begin{align*}
     \left \| \sum_{n=1}^\infty \alpha_n b_n \right \|_{(E_0,E_1)_{\theta,q}}
&\le \coref{eqn:J_K_equivalence}  \| ( \alpha_n J(2^{-n},b_n; E_0,E_1))_{n=1}^\infty \|_{\ell_q^{(\theta)}} \\
&\le \coref{eqn:J_K_equivalence}  \kappa  \| ( \alpha_n)_{n=1}^\infty \|_{\ell_q^{(\theta)}}
\end{align*}
which proves the right-hand side inequality.
The left-hand side inequality is obtained with the help of the Khintchine-Kahane inequality
(with a constant $c_q \ge 1$ depending on $q$ only) by
\begin{align*}
&   \int_M \left \|  \sum_{n=1}^\infty \alpha_n \vare_n(\xi) b_n  \right \|_{(E_0,E_1)_{\theta,q}} \rho(\od \xi) \\
&\sim_{c_q} \left ( \int_M \left \|  \sum_{n=1}^\infty \alpha_n \vare_n(\xi) b_n  \right \|_{(E_0,E_1)_{\theta,q}}^q \rho(\od \xi) \right )^\frac{1}{q} \\
& =
   \left (  \int_0^\infty  \lambda^{-q\theta-1} \int_M K\left (\lambda,\sum_{n=1}^\infty \alpha_n \vare_n(\xi) b_n;E_0,E_1 \right )^q \rho(\od \xi)
   \od \lambda \right )^\frac{1}{q} \\
& \ge 
   \left ( \sum_{k=0}^\infty \int_{\frac{1}{2^{k+1}}}^{\frac{1}{2^k}} \lambda^{-q\theta-1} \int_M 
   K\left (\lambda,\sum_{n=1}^\infty \alpha_n \vare_n(\xi) b_n;E_0,E_1 \right )^q \rho(\od \xi)
   \od \lambda \right )^\frac{1}{q} \\
& \ge 
   \left ( \sum_{k=0}^\infty \int_{\frac{1}{2^{k+1}}}^{\frac{1}{2^k}} \lambda^{-q\theta-1} \int_M K\left (\frac{1}{2^{k+1}},\sum_{n=1}^\infty \alpha_n \vare_n(\xi) b_n;E_0,E_1 \right )^q \rho(\od \xi)
   \od \lambda \right )^\frac{1}{q} \\
& \ge 
   \left ( \sum_{k=0}^\infty \frac{1}{2^{k+1}} 2^{k(q\theta+1)} \int_M K\left (\frac{1}{2^{k+1}},\sum_{n=1}^\infty \alpha_n \vare_n(\xi) b_n;E_0,E_1 \right )^q \rho(\od \xi)
    \right )^\frac{1}{q} \\
& \ge 
   \left ( \frac{1}{2} \sum_{k=0}^\infty 2^{k q\theta } K\left (\frac{1}{2^{k+1}},\alpha_{k+1} b_{k+1};E_0,E_1 \right )^q 
    \right )^\frac{1}{q} \\
& \ge  \delta \left ( \frac{1}{2} \right )^\frac{1}{q} 
   \left ( \sum_{k=0}^\infty 2^{k q\theta } |\alpha_{k+1}|^q 
    \right )^\frac{1}{q} \\
& = \delta \left ( \frac{1}{2} \right )^\frac{1}{q} 2^{-\theta} \| (\alpha_n)_{n=1}^\infty \|_{\ell_q^{(\theta)}},
\end{align*} 
where we use
\[    \left ( \int_M K\left ( \frac{1}{2^{k+1}},\sum_{n=1}^\infty \alpha_n \varepsilon_n(\xi) b_n;E_0,E_1  
      \right )^q \rho(\od \xi) \right )^\frac{1}{q} 
  \ge K\left ( \frac{1}{2^{k+1}},\alpha_{k+1} b_{k+1};E_0,E_1  \right ).\]
This concludes the proof.
\end{proof}


\section{An application to Gaussian Besov spaces}
\label{sec:application_Gaussian_Besov}


\subsection{Gaussian Besov spaces}
We let $d\ge 1$ and let $\gamma_d$ be the standard Gaussian measure on $\R^d$. The space $L_2(\R^d,\gamma_d)$ is equipped 
with the orthonormal basis of generalised Hermite polynomials $(h_{k_1,...,k_d})_{k_1,...,k_d=0}^\infty$ 
given by
\[ h_{k_1,...,k_d}(x_1,...,x_d) := h_{k_1}(x_1)\cdots h_{k_d}(x_d), \]
where $(h_k)_{k=0}^\infty\subset L_2(\R,\gamma_1)$ is the orthonormal basis of Hermite polynomials. 
The Gaussian Sobolev space $\D_{1,2}(\R^d,\gamma_d)$ consists of all $f\in L_2 (\R^d,\gamma_d)$ such that
\[       \sum_{k_1,...,k_d=0}^\infty \langle f, h_{k_1,...,k_d} \rangle_{L_2(\R^d,\gamma_d)}^2
     \left\|  \nabla h_{k_1,...,k_d} \right \|_{L_2(\R^d,\gamma_d)}^2 
               <  \infty . \]
We equip $\D_{1,2}(\R^d,\gamma_d)$ with the norm 
\[ \| f \|_{\D_{1,2}(\R^d,\gamma_d)}:= \sqrt{\| f\|_{L_2(\R^d,\gamma_d)}^2 + \| Df \|_{L_2(\R^d,\gamma_d)}^2} \]
and obtain a Banach space.
For $(\theta,q) \in (0,1) \times [1,\infty]$ we let 
\[ \B_{2,q}^\theta(\R^d,\gamma_d):= (L_2(\R^d,\gamma_d),\D_{1,2}(\R^d,\gamma_d))_{\theta,q} \]
be the Gaussian Besov space of smoothness $\theta$ and with
fine-index $q$.


\subsection{Description of the $K$-functional}
\label{sec:description_K-functional}
Assume a basis $(\Omega,\cF,\P,(\cF_t)_{t\ge 0})$, such that 
$(\Omega,\cF,\P)$ is a complete probability space and $(\cF_t)_{t\ge 0}$ is the augmented natural 
filtration of a standard $d$-dimensional standard Brownian motion $(W_t)_{t\ge 0}$, 
where for convenience $W_0\equiv 0$ and all paths of $W$ are assumed to be continuous. 
Assume a corresponding copy $(\Omega',\cF',\P',(\cF'_t)_{t\ge 0},(W'_t)_{t\ge 0})$.
From \cite[Proposition 3.4]{Geiss:Toivola:15} we know that,
for $t\in (0,1)$,
\begin{multline} \label{eqn:K-functional_Gauss_space}
K(f,\sqrt{1-t};L_2(\R^d,\gamma_d),\D_{1,2}(\R^d,\gamma_d)) \\ 
\sim_{\coref{eqn:K-functional_Gauss_space}} \| f(W_1) - f(W_t+W'_{1-t}) \|_{L_2} + \sqrt{1-t} \| f \|_{L_2(\R^d,\gamma_d)} 
\end{multline}
where $\coref{eqn:K-functional_Gauss_space}>0$ is an absolute constant.


\subsection{A construction for condition  \eqref{eqn:J-K-extreme_points}}

Assuming Borel functions $b_n:\R^d\to \R$,  we replace condition \eqref{eqn:J-K-extreme_points}
as follows: there exist $\delta,\kappa >0$ such that, for all $n=1,2,\ldots$,
\begin{align} 
     \max\{ \| b_n \|_{L_2(\R^d,\gamma_d)}, 2^{-n} \| b_n \|_{\D_{1,2}(\R^d,\gamma_d)}  \}
&\le \kappa, \label{eqn:extreme_1} \\
     \| b_n(W_1) - b_n(W_{1-\frac{1}{4^n}} + W'_{\frac{1}{4^n}}) \|_{L_2} 
& \ge \delta \label{eqn:extreme_2}.
\end{align}
We use the following assumptions to construct such $b_n$:

\begin{assu}
\label{assertion:condition_mother_function}
\rm
We assume a Borel measurable $b:\R^d \to \R$ such that 
\begin{enumerate}
\item \label{eqn:1:assertion:condition_mother_function}
      $\sup_{x\in \R^d} |b(x)| \le \kappa$,
\item \label{eqn:2:assertion:condition_mother_function}
      $|b|_{\rm Lip} \le \kappa$,
\item \label{eqn:3:assertion:condition_mother_function}
      there is an $M>0$ such that 
      \[  \inf_{x\in \R} \int_{|y|\le M} \int_{|\overline{y}|\le M}  | b(x+ y)-  b(x+\overline{y}) |^2 \od y \od \overline{y} > 0.\]
\end{enumerate} 
\end{assu}

\begin{lemm}
\label{statement:scaling}
Assume the conditions in \cref{assertion:condition_mother_function} and define
\[ b_n(x) :=  b(2^{n-1}x)
   \sptext{1}{for}{1} n\ge 1. \]
Then \eqref{eqn:extreme_1} and \eqref{eqn:extreme_2} are satisfied.
\end{lemm}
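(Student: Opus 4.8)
The plan is to establish \eqref{eqn:extreme_1} and \eqref{eqn:extreme_2} separately; the former is elementary, while the latter rests on a conditioning argument that exploits the specific scaling built into the time points $1-\frac{1}{4^n}$.

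For \eqref{eqn:extreme_1} I would argue as follows. Since $\sup_x|b(x)|\le\kappa$ by \eqref{eqn:1:assertion:condition_mother_function}, one gets $\|b_n\|_{L_2(\R^d,\gamma_d)}\le\kappa$ at once. For the derivative term I would invoke Rademacher's theorem: by \eqref{eqn:2:assertion:condition_mother_function} the map $b$ is Lipschitz, hence differentiable almost everywhere with $|\nabla b|\le\kappa$, and the chain rule gives $\nabla b_n(x)=2^{n-1}(\nabla b)(2^{n-1}x)$, so that $\|Db_n\|_{L_2(\R^d,\gamma_d)}\le 2^{n-1}\kappa$. Combining the two bounds yields $2^{-n}\|b_n\|_{\D_{1,2}(\R^d,\gamma_d)}=2^{-n}\sqrt{\|b_n\|_{L_2}^2+\|Db_n\|_{L_2}^2}\le\kappa\sqrt{4^{-n}+\tfrac14}\le\kappa$ for $n\ge1$, so that both terms in the maximum defining \eqref{eqn:extreme_1} are bounded by $\kappa$.

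For \eqref{eqn:extreme_2}, write $t_n:=1-\frac{1}{4^n}$, so that $1-t_n=4^{-n}$, and decompose $W_1=W_{t_n}+(W_1-W_{t_n})$. The increment $W_1-W_{t_n}$ is independent of $W_{t_n}$, and both are independent of $W'$. Setting $X:=2^{n-1}W_{t_n}$, $Y:=2^{n-1}(W_1-W_{t_n})$ and $Y':=2^{n-1}W'_{4^{-n}}$, these three vectors are independent, and the decisive point is that $Y$ and $Y'$ are both $N(0,\tfrac14 I_d)$, with a law that does \emph{not} depend on $n$. Since $b_n(x)=b(2^{n-1}x)$, this turns the left-hand side of \eqref{eqn:extreme_2} into $\E\,|b(X+Y)-b(X+Y')|^2$. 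I would then condition on $X$: for a fixed value $x$ of $X$, bounding the $N(0,\tfrac14 I_d)$-density $g$ from below on the ball $\{|y|\le M\}$ by $g_{\min}:=(\pi/2)^{-d/2}e^{-2M^2}>0$ reduces the resulting double Gaussian integral to the local quantity appearing in \eqref{eqn:3:assertion:condition_mother_function}, namely
\[
\E\big[|b(x+Y)-b(x+Y')|^2\big]
\ge g_{\min}^2\int_{|y|\le M}\int_{|\overline y|\le M}|b(x+y)-b(x+\overline y)|^2\,\od y\,\od\overline y
\ge g_{\min}^2\, c_0,
\]
where $c_0>0$ denotes the infimum in \eqref{eqn:3:assertion:condition_mother_function}, the bound being uniform in $x$. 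Taking the expectation over $X$ then gives \eqref{eqn:extreme_2} with $\delta:=g_{\min}\sqrt{c_0}$, independently of $n$.

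The main obstacle, and the heart of the construction, is this uniform-in-$n$ lower bound. Everything depends on having chosen $1-t_n=4^{-n}$ so that, after the dyadic rescaling $x\mapsto 2^{n-1}x$, the increments $Y,Y'$ acquire the fixed variance $\tfrac14$; otherwise the density floor $g_{\min}$ would deteriorate with $n$ and \eqref{eqn:3:assertion:condition_mother_function} could not be applied uniformly. Conditioning on the common low-frequency part $X$ is precisely what lets the translation-uniform, purely local oscillation bound \eqref{eqn:3:assertion:condition_mother_function} control a genuinely global Gaussian average, and I expect the only further technical care needed is the almost-everywhere differentiability used in \eqref{eqn:extreme_1} and the elementary density lower bound used here.
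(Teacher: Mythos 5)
Your proof is correct and follows essentially the same route as the paper: the bound \eqref{eqn:extreme_1} from the sup and Lipschitz bounds via the chain rule, and \eqref{eqn:extreme_2} by isolating the two independent increments of variance $\tfrac14 I_d$ after the dyadic rescaling (the paper phrases this via Brownian scaling $2^{n-1}W_t\stackrel{d}{=}W_{4^{n-1}t}$ and an infimum over the conditioned low-frequency part, which is exactly your conditioning on $X$), then bounding the Gaussian density from below on $\{|y|\le M\}$ and invoking \eqref{eqn:3:assertion:condition_mother_function}. Your bookkeeping of the constant (squaring $g_{\min}$ for the two integrals) is in fact slightly more careful than the displayed constant in the paper.
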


\begin{proof}
Condition \eqref{eqn:extreme_1} follows from  assumptions 
\eqref{eqn:1:assertion:condition_mother_function} and \eqref{eqn:2:assertion:condition_mother_function}.
To verify condition \eqref{eqn:extreme_2}, we use
the scaling property, set $\sigma := \frac{1}{4}$, and get that
\begin{align*}
&    \left \| b_n(W_1) - b_n\left (W_{1-\frac{1}{4^n}} + W'_{\frac{1}{4^n}}\right ) \right \|_{L_2}^2 \\
& = \left \| b(W_{4^{n-1}}) - b\left ( W_{4^{n-1}-\frac{1}{4}} + W'_{\frac{1}{4}} \right ) \right \|_{L_2}^2 \\
&\ge \inf_{x\in \R^d} \| b(x + W_\sigma) - b(x + W'_\sigma)\|_{L_2}^2 \\
& \ge \left ( \frac{1}{\sqrt{2\pi \sigma^2}} e^{-\frac{M^2}{2 \delta }} \right )^d  
      \inf_{x\in \R^d} \int_{|y|\le M} \int_{|\overline{y}|\le M}  | b_1(x+ y)-  b_1(x+\overline{y}) |^2 \od y \od \overline{y} >0,
\end{align*}
which verifies \eqref{eqn:extreme_2}.
\end{proof}
\medskip

\begin{lemm}
\label{statement:periodicity_implies_oscillation}
Assume $b:\R^d \to \R$ to be continuous and bounded and an $R>0$ such that the following
is satisfied:
\begin{enumerate}[{\rm (1)}]
\item The function $b$ is not constant on $\{ x \in \R^d : |x| \le R\}$.
\item For all $x\in \R^d$ with $|x|>R$ there is $z\in \R^d$ with $|z| \le R$ such that
      \[ b(x+y) = b(z + y) \sptext{1}{for all}{1} y\in \R^d.\]
\end{enumerate}
Then condition \eqref{eqn:3:assertion:condition_mother_function} of \cref{assertion:condition_mother_function}
is satisfied.
\end{lemm}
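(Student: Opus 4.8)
The plan is to introduce, for a radius $M>0$ to be fixed below, the closed balls $B(c,r):=\{\,v\in\R^d:|v-c|\le r\,\}$ and the oscillation functional
\[
   F_M(x) := \int_{|y|\le M}\int_{|\overline y|\le M}
             \bigl| b(x+y)-b(x+\overline y) \bigr|^2 \,\od y\,\od\overline y ,
\]
so that condition \eqref{eqn:3:assertion:condition_mother_function} amounts to producing an $M$ with $\inf_{x\in\R^d} F_M(x)>0$. Since $b$ is continuous and bounded, the integrand is dominated by $4\,(\sup_{v\in\R^d}|b(v)|)^2$ on the fixed finite-measure domain, so dominated convergence shows that $x\mapsto F_M(x)$ is continuous. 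Moreover, if $F_M(x)=0$, then $b(x+y)=b(x+\overline y)$ for almost every pair, which by continuity of $b$ forces $b$ to be constant on $B(x,M)$.

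The first step is to use the periodicity assumption~(2) to collapse the infimum over $\R^d$ into an infimum over the compact ball $B(0,R)$. Indeed, for $|x|>R$ pick $z$ as in~(2); then $b(x+\,\cdot\,)=b(z+\,\cdot\,)$ as functions on $\R^d$, and inserting this identity into both occurrences of $b$ inside $F_M$ gives $F_M(x)=F_M(z)$ with $|z|\le R$. Together with the trivial case $|x|\le R$, this yields
\[
   \inf_{x\in\R^d} F_M(x) = \inf_{z\in B(0,R)} F_M(z)
\]
for every $M>0$. As $F_M$ is continuous and $B(0,R)$ is compact, this infimum is attained at some $z_0\in B(0,R)$.

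The second step is to choose $M:=2R$ and to deduce positivity at $z_0$ from assumption~(1). For every $z\in B(0,R)$ and every $v\in B(0,R)$ one has $|v-z|\le 2R=M$, so $B(0,R)\subseteq B(z,M)$. Consequently, if $F_M(z_0)=0$ held, then $b$ would be constant on $B(z_0,M)\supseteq B(0,R)$, contradicting that $b$ is non-constant on $B(0,R)$. Hence $F_M(z_0)>0$ and $\inf_{x\in\R^d}F_M(x)=F_M(z_0)>0$, which is exactly condition \eqref{eqn:3:assertion:condition_mother_function}.

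I expect the one genuinely delicate point to be the reduction in the first step: assumption~(2) has to be read as an identity of the \emph{entire translated functions} $b(x+\,\cdot\,)$ and $b(z+\,\cdot\,)$, not merely as an equality of values at a single point, since it is precisely this functional identity that turns $F_M(x)$ into $F_M(z)$ and thereby confines the infimum to a compact set. The remaining ingredients---continuity of $F_M$ via dominated convergence, attainment of the infimum on the compact ball $B(0,R)$, and the elementary containment $B(0,R)\subseteq B(z,2R)$---are routine.
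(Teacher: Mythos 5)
Your proposal is correct and follows essentially the same route as the paper's proof: reduce the infimum to the compact ball $\{|z|\le R\}$ via hypothesis (2), use continuity to see that a zero infimum would be attained at some $z_0$ with $|z_0|\le R$, and then contradict the non-constancy of $b$ on $\{|x|\le R\}$ using $B(0,R)\subseteq B(z_0,2R)$ with the choice $M=2R$. The only difference is that you spell out the continuity of the oscillation functional and the attainment of the infimum in more detail than the paper does.
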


\begin{proof}
We choose $M:=2R$ and get
\begin{multline*}
    \inf_{x\in \R^d} \int_{|y|\le M} \int_{|\overline{y}|\le M}  | b(x+ y)-  b(x+\overline{y}) |^2 \od y \od \overline{y}\\
  = \inf_{|z| \le R} \int_{|y|\le 2R} \int_{|\overline{y}|\le 2R}  | b(z+ y)-  b(z+\overline{y}) |^2 \od y \od \overline{y}.
\end{multline*}
Assuming the right-hand side to be zero implies by the continuity of $b$ that there is an $z_0\in \R^d$ with
$|z_0|\le R$ such that
\[ \int_{|y|\le 2R} \int_{|\overline{y}|\le 2R}  | b(z_0+ y)-  b(z_0+\overline{y}) |^2 \od y \od \overline{y} = 0. \]
Therefore, $b(z_0+y) = b(z_0)$ for all $|y| \le 2R$ and $b$ would be constant on 
$\{ x \in \R^d : |x| \le R\}$ which is a contradiction.
\end{proof}


\subsection{An application to H\"older spaces}

Now we apply \cref{statement:random_basis_of_extreme_elements} to the relation between H\"older functions and Gaussian Besov spaces.

\begin{coro}
\label{statement:sharp_embedding_Hoelder_Besov}
Let $\theta \in (0,1)$ and $1\le \underline{q}<\overline{q} \le \infty$.
Then
\[ \hoeldO{\theta,\overline{q}} \not\subseteq \B_{2,\underline{q}}(\R^d,\gamma_d).\]
\end{coro}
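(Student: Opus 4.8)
The plan is to run \cref{statement:random_basis_of_extreme_elements} simultaneously on the two couples $(C_b^0(\R^d),\Lip^0(\R^d))$ and $(L_2(\R^d,\gamma_d),\D_{1,2}(\R^d,\gamma_d))$ using \emph{one and the same} randomised system $\sum_n \alpha_n\vare_n b_n$. The mechanism is that for such a system the theorem produces an upper bound governed by the $J$-functional and a lower bound governed by the $K$-functional; feeding the H\"older couple the large fine-index $\overline{q}$ (upper bound) and the Gaussian couple the small fine-index $\underline{q}$ (lower bound) will force an embedding $\ell_{\underline{q}}^{(\theta)}\hookrightarrow\ell_{\overline{q}}^{(\theta)}$, which fails when $\underline{q}<\overline{q}$. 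Concretely, I would fix a mother function $b$ satisfying \cref{assertion:condition_mother_function} with $b(0)=0$ — for instance the bounded, $1$-Lipschitz, $1$-periodic sawtooth $b(x)=\mathrm{dist}(x_1,\mathbb Z)$, whose oscillation \eqref{eqn:3:assertion:condition_mother_function} is supplied by \cref{statement:periodicity_implies_oscillation} with $R=1$ — and set $b_n(x):=b(2^{n-1}x)$, so that $b_n\in C_b^0(\R^d)\cap\Lip^0(\R^d)$.

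First I would verify that this single family satisfies the extremality hypothesis \eqref{eqn:J-K-extreme_points} on both couples. For the Gaussian couple this is exactly the content of the preparatory lemmas: \cref{statement:scaling} yields \eqref{eqn:extreme_1} and \eqref{eqn:extreme_2}, where \eqref{eqn:extreme_1} reads $J(2^{-n},b_n;L_2,\D_{1,2})\le\kappa$, while \eqref{eqn:extreme_2} together with the $K$-functional description \eqref{eqn:K-functional_Gauss_space} at the matching scale $\sqrt{1-t}=2^{-n}$, i.e. $1-t=4^{-n}$, gives $K(2^{-n},b_n;L_2,\D_{1,2})\ge \delta/\coref{eqn:K-functional_Gauss_space}$. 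For the H\"older couple only the upper half is needed: since $\|b_n\|_{C_b^0(\R^d)}=\sup|b|$ and $|b_n|_{\Lip}=2^{n-1}|b|_{\Lip}$, scaling gives $J(2^{-n},b_n;C_b^0,\Lip^0)=\max\{\sup|b|,\tfrac12|b|_{\Lip}\}\le\kappa$ uniformly in $n$.

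Next I would invoke \cref{statement:random_basis_of_extreme_elements} twice. Applied to $(L_2,\D_{1,2})$ with $q=\underline{q}\in[1,\infty)$, its left-hand inequality gives $\tfrac{1}{c}\|(\alpha_n)\|_{\ell_{\underline q}^{(\theta)}}\le\int_M\|\sum_n\alpha_n\vare_n(\xi)b_n\|_{\B_{2,\underline q}^\theta(\R^d,\gamma_d)}\,\rho(\od\xi)$. Applied to $(C_b^0,\Lip^0)$ with $q=\overline{q}$, its right-hand inequality gives $\sup_{\vare_n=\pm1}\|\sum_n\alpha_n\vare_n b_n\|_{\hoeldO{\theta,\overline q}}\le c\,\|(\alpha_n)\|_{\ell_{\overline q}^{(\theta)}}$; here I would note that this right-hand inequality uses only the $J$-bound and \eqref{eqn:J_K_equivalence}, so it persists for $\overline{q}=\infty$ although the theorem is stated for $q<\infty$. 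Now suppose, for contradiction, that $\hoeldO{\theta,\overline q}\subseteq\B_{2,\underline q}^\theta(\R^d,\gamma_d)$. Both spaces embed continuously into $L_2(\R^d,\gamma_d)$ — a bounded or Lipschitz function vanishing at the origin is square-integrable against $\gamma_d$ — so the closed graph theorem upgrades the set inclusion to $\|\cdot\|_{\B_{2,\underline q}^\theta}\le C\|\cdot\|_{\hoeldO{\theta,\overline q}}$. Applying this to each finitely supported $\sum_n\alpha_n\vare_n(\xi)b_n$, integrating in $\xi$, and using $\int_M\le\sup_{\vare}$ together with the two one-sided estimates above yields $\|(\alpha_n)\|_{\ell_{\underline q}^{(\theta)}}\le C'\|(\alpha_n)\|_{\ell_{\overline q}^{(\theta)}}$ for every finitely supported scalar sequence, with $C'$ independent of the sequence.

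Finally I would contradict this by testing against $\alpha_n=2^{-\theta n}$ for $1\le n\le N$ and $\alpha_n=0$ otherwise, for which $\|(\alpha_n)\|_{\ell_{\underline q}^{(\theta)}}=N^{1/\underline q}$ and $\|(\alpha_n)\|_{\ell_{\overline q}^{(\theta)}}=N^{1/\overline q}$ (with $N^{1/\infty}=1$); since $1/\underline q-1/\overline q>0$, letting $N\to\infty$ breaks the uniform bound. The genuinely nontrivial ingredient is already packaged in the earlier development, namely the Gaussian $K$-functional lower bound \eqref{eqn:extreme_2} proved in \cref{statement:scaling} via \eqref{eqn:K-functional_Gauss_space}. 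Within the present argument the delicate points are the scale matching $\sqrt{1-t}=2^{-n}$ between \eqref{eqn:K-functional_Gauss_space} and \eqref{eqn:J-K-extreme_points}, the separate bookkeeping for $\overline{q}=\infty$, and above all the closed graph step, which requires placing both Banach spaces inside a common Hausdorff space; I expect this common-ambient-space justification to be the main thing to get right.
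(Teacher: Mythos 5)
Your proposal is correct and follows essentially the same route as the paper: the same scaled periodic system $b_n$ fed into \cref{statement:random_basis_of_extreme_elements} with the lower bound on the Gaussian couple at fine index $\underline{q}$ played against the upper bound on the H\"older couple at $\overline{q}$, and a Baire-category step (closed graph in your version, open mapping with an explicit sequence $f_N$ built from a fixed $a\in\ell_{\overline q}^{(\theta)}\setminus\ell_{\underline q}^{(\theta)}$ in the paper's) to convert the failed uniform sequence-space inequality into an actual counterexample function. The one place requiring the care you already indicate is the use of the right-hand inequality of \eqref{eqn:statement:random_basis_of_extreme_elements} for $(C_b^0(\R^d),\Lip^0(\R^d))$, where the theorem's hypothesis $E_1\hookrightarrow E_0$ fails (a Lipschitz function vanishing at $0$ need not be bounded) and $\overline q$ may equal $\infty$ --- as you observe, that inequality rests only on the $J$-bound and \eqref{eqn:J_K_equivalence}, so it survives both issues; the paper instead obtains the same bound by interpolating the operator $T((\beta_n)_n)=\sum_n\beta_n b_n$ between $\ell_1^{(0)}\to C_b^0(\R^d)$ and $\ell_1^{(1)}\to\Lip^0(\R^d)$.
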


\begin{proof}
(a) Without loss of generality we can assume that $\overline{q} < \infty$.
We define the norms
\begin{align*}
\| f \|_0 &:= \max \left \{ \| f \|_{\hoeldO {\theta,\overline{q}}}, \| f\|_{\B_{2,\overline{q}}(\R^d,\gamma_d)}  \right \},\\
\| f \|_1 &:= \max \left \{ \| f \|_{\hoeldO {\theta,\overline{q}}}, \| f\|_{\B_{2,\underline{q}}(\R^d,\gamma_d)} \right \}.
\end{align*}
Then one has that 
$\| \cdot \|_0 \le c \| \cdot \|_1$ for some $c>0$
because of \eqref{eqn:embedding_interpolation_same_theta} (which is a continuous embedding). If we find $f_N$ such that
\[ \sup_{N=1,2,\ldots} \| f_N  \|_0 < \infty \sptext{1}{and}{1}
   \sup_{N=1,2,\ldots} \| f_N \|_1 =\infty, \]
then, by the open mapping theorem, there is an $f$ such that $\| f  \|_1 = \infty$ and
$\| f  \|_0 <\infty$. 
This would imply $f\in \hoeldO{\theta,\overline{q}}$ but $f\not\in \B_{2,\underline{q}}(\R^d,\gamma_d)$ and therefore
the theorem would be verified.
\medskip

(b) To construct such $f_N$ we first choose a non-constant $B:\R^d\to [0,\kappa]\in C^\infty$ 
with $|B|_{\Lip}\le \kappa$ for some $\kappa>0$ and with $\supp(B)\subseteq [-1,1]^d$.
From $B$ we construct 
\[ b(x):= \sum_{k_1,\ldots,k_d=-\infty}^\infty 1_{(2k_1,\ldots,2k_d)+[-1,1]^d}(x) B\big (x-(2k_1,\ldots,2k_d)\big ).\] 
Using \cref{statement:periodicity_implies_oscillation} (with $R=\sqrt{d}$)
and \cref{statement:scaling} we construct
$(b_n)_{n=1}^\infty$ satisfying \eqref{eqn:extreme_1} and \eqref{eqn:extreme_2}. 
Then we take an $a=(\alpha_n)_{n\ge 1} \in \ell_{\overline{q}}^{(\theta)} \setminus  \ell_{\underline{q}}^{(\theta)}$
and let 
$a^N := (\alpha_1,\ldots,\alpha_N,0,0,\ldots)$.
From \cref{statement:random_basis_of_extreme_elements} (applied to $(\theta,\underline{q})$) it follows that 
for each $N$ there are signs $\vare_1^N,\ldots,\vare_N^N\in \{ -1,1 \}$ such that
for $f_N := \sum_{n=1}^N \vare_n^N \alpha_n b_n$ one has
\[
      \frac{1}{\coref{eqn:statement:random_basis_of_extreme_elements}^{(\underline{q})}} 
      \left \| a^N \right \|_{\ell_{\underline{q}}^{(\theta)}} 
  \le \int_M \left \|  \sum_{n=1}^N \alpha_n \vare_n(\xi) b_n  \right \|_{B^\theta_{2,\underline{q}}(\R^d,\gamma_d)} \rho(\od \xi)  
  \le  \left \| f_N  \right \|_{B^\theta_{2,\underline{q}}(\R^d,\gamma_d)}
\]
with $\| a^N \|_{\ell_{\underline{q}}^{(\theta)}}\to \infty$ as $N\to \infty$.
Therefore, $\sup_{N=1,2,\ldots} \| f_N\|_1 =\infty$.
\medskip

(c) Using  \cref{statement:random_basis_of_extreme_elements} (this time applied to $(\theta,\overline{q})$)
gives 
\[     \| f_N \|_{\B_{2,\overline{q}}^\theta(\R^d,\gamma_d)}
   \le \coref{eqn:statement:random_basis_of_extreme_elements}^{(\overline{q})}  \| a^N \|_{\ell_{\overline{q}}^{(\theta)}} 
   \le \coref{eqn:statement:random_basis_of_extreme_elements}^{(\overline{q})}  \| a   \|_{\ell_{\overline{q}}^{(\theta)}} < \infty. \]

(d) Finally, we show that there is a constant $c>0$ such that 
\[     \| f_N \|_{\hoeldO {\theta,\overline{q}}}
   \le c \| a^N \|_{\ell_{\overline{q}}^{(\theta)}} 
   \le c \| a   \|_{\ell_{\overline{q}}^{(\theta)}} < \infty. \]
For this purpose we define a linear operator bounded as operator between
\[ T: \ell_1^{(0)} \to C_b^0(\R^d) 
   \sptext{1}{and}{1}
   T: \ell_1^{(1)} \to \Lip^0(\R^d) \]
by $T((\beta_n)_{n=1}^\infty):= \sum_{n=1}^\infty \beta_n b_n$. By interpolation we get that 
\[  T: \ell_{\overline{q}}^{(\theta)} \to (C_b^0(\R^d),\Lip^0(\R^d))_{\theta,\overline{q}} \]
is bounded as well. 
\medskip

(e) Combining (b), (c), and (d) gives
\[ \sup_{N=1,2,\ldots}\| f_N\|_0 < \infty 
   \sptext{1}{and}{1}
   \sup_{N=1,2,\ldots}\| f_N\|_1 = \infty \]
and the proof is complete. 
\end{proof}


\section{An application to parabolic PDEs}

We use $d=1$ and a stochastic basis and Brownian motion 
$(\Omega,\cF,\P,(\cF_t)_{t\in [0,1]})$ and $(W_t)_{t\in [0,1]}$ as in \cref{sec:description_K-functional},
here restricted to $t\in [0,1]$ and with $\cF=\cF_1$.
For $f\in L_2(\R,\gamma_1)$ we let
$F:[0,T]\times \R\to \R$ be given by
\[ F(t,x) := \E f(x+W_{1-t}). \]
Moreover, for $t\in [0,1)$ and $\alpha>0$ we let
\[ \varphi_t := \frac{\partial F}{\partial x}(t,W_t) 
   \sptext{1}{and}{1} 
   \cI^\alpha_t \varphi := \alpha \int_0^1 (1-u)^{\alpha-1} \varphi_{u\wedge t} \od u. \] 
For $\theta \in (0,1)$ we know from \cite[Theorem 6.6]{Geiss:Nguyen:20} that 
\begin{equation}\label{eqn:BMO_vs_Hoelder_with_RL}
       \cI^\alpha \varphi - \varphi_0 \in \BMO_2([0,1))
       \sptext{1}{for}{1} f \in \hoelO{\theta,2}
       \sptext{1}{and}{1} \alpha:=\frac{1-\theta}{2}.
\end{equation}
The question is whether we can use the condition $f\in \hoelO{\theta,q}$ for some
$q\in (2,\infty]$. We will disprove this:

\begin{theo}
If $(\theta,q)\in (0,1)\times (2,\infty]$, then there is an $f\in \hoelO {\theta,q}$ such that 
\[ \cI^\alpha \varphi - \varphi_0 \not \in \BMO_2([0,1))
   \sptext{1}{for}{1} \alpha:=\frac{1-\theta}{2} . \]
\end{theo}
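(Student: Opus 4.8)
The plan is to reduce this statement to the sharpness machinery already developed for Gaussian Besov spaces in \cref{statement:sharp_embedding_Hoelder_Besov}. The key idea is that the operator $f \mapsto \cI^\alpha\varphi - \varphi_0$ should be controlled above and below by the Gaussian Besov structure: specifically, I expect that for the correct choice $\alpha = \frac{1-\theta}{2}$ the $\BMO_2([0,1))$-norm of $\cI^\alpha\varphi - \varphi_0$ is comparable (at least bounded below) by $\|f\|_{\B_{2,2}^\theta(\R,\gamma_1)}$, or more precisely by the quantity that the $K$-functional description in \eqref{eqn:K-functional_Gauss_space} measures. The membership \eqref{eqn:BMO_vs_Hoelder_with_RL} for $f \in \hoelO{\theta,2}$ is the ``positive'' half; I would aim to show that when we only have $f \in \hoelO{\theta,q}$ with $q > 2$, the finer index $q$ is genuinely too weak, reflecting the failure of the embedding $\hoelO{\theta,q} \hookrightarrow \B_{2,2}^\theta(\R,\gamma_1)$ that is exactly the content of \cref{statement:sharp_embedding_Hoelder_Besov} with $\underline q = 2$ and $\overline q = q$.

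First I would make the link between $\BMO_2([0,1))$ and the Gaussian $K$-functional precise. Using $\varphi_t = \frac{\partial F}{\partial x}(t,W_t)$ together with the martingale/Clark--Ocone representation of $f(W_1)$, the increment $f(W_1) - f(W_t + W'_{1-t})$ appearing in \eqref{eqn:K-functional_Gauss_space} should be expressible through the gradient process $\varphi$, so that the $L_2$-oscillation controlled by $\BMO_2$ on a dyadic time interval corresponds to a $K$-functional evaluated at an appropriate scale $\sqrt{1-t}$. I would then reuse the explicit construction from the proof of \cref{statement:sharp_embedding_Hoelder_Besov}: take the mother function $B \in C^\infty$ with $\supp(B) \subseteq [-1,1]$, build the periodised $b$, set $b_n(x) := b(2^{n-1}x)$ (so that \eqref{eqn:extreme_1} and \eqref{eqn:extreme_2} hold via \cref{statement:scaling} and \cref{statement:periodicity_implies_oscillation}), and form $f_N := \sum_{n=1}^N \vare_n^N \alpha_n b_n$ with coefficients $a = (\alpha_n) \in \ell_q^{(\theta)} \setminus \ell_2^{(\theta)}$ and the randomising signs supplied by \cref{statement:random_basis_of_extreme_elements}.

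With this setup, \cref{statement:random_basis_of_extreme_elements} applied to the pair $(\theta,q)$ bounds $\|f_N\|_{\hoelO{\theta,q}}$ (via step (d) of the previous proof, through the interpolation of $T$) uniformly by $\|a\|_{\ell_q^{(\theta)}} < \infty$, while the same theorem applied to $(\theta,2)$ gives the lower bound $\|f_N\|_{\B_{2,2}^\theta(\R,\gamma_1)} \ge \coref{eqn:statement:random_basis_of_extreme_elements}^{-1}\|a^N\|_{\ell_2^{(\theta)}} \to \infty$. Transferring this lower bound through the $K$-functional--to--$\BMO_2$ comparison established in the first step yields $\sup_N \|\cI^\alpha\varphi^{(N)} - \varphi_0^{(N)}\|_{\BMO_2} = \infty$ along the sequence $f = f_N$, and the open mapping / closed graph argument (exactly as in part (a) of the proof of \cref{statement:sharp_embedding_Hoelder_Besov}) then produces a single $f \in \hoelO{\theta,q}$ with $\cI^\alpha\varphi - \varphi_0 \notin \BMO_2([0,1))$.

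The main obstacle I expect is the first step: quantitatively relating the Riemann--Liouville-type average $\cI^\alpha\varphi - \varphi_0$ in its $\BMO_2$-norm to the Gaussian $K$-functional of $f$. The result \eqref{eqn:BMO_vs_Hoelder_with_RL} cited from \cite{Geiss:Nguyen:20} only gives one direction (sufficiency for $q=2$), so the delicate part is establishing a genuine lower bound showing that $\BMO_2$-membership \emph{forces} the $\B_{2,2}^\theta$-control, i.e.\ that the specific exponent $\alpha = \frac{1-\theta}{2}$ calibrates the fractional averaging to exactly the $q=2$ fine-index. I would look to the machinery of \cite{Geiss:Nguyen:20} to extract such a two-sided or lower estimate on dyadic intervals; if only a weaker local lower bound is available, it should still suffice because the construction concentrates oscillation at the dyadic scales $t \approx 1 - 4^{-n}$ matched to each $b_n$ via \eqref{eqn:extreme_2}.
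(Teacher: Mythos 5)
Your overall reduction is the right one and matches the paper's: show that $\cI^\alpha\varphi-\varphi_0\in\BMO_2([0,1))$ would force $f\in\B_{2,2}^\theta(\R,\gamma_1)$, and then invoke \cref{statement:sharp_embedding_Hoelder_Besov} with $(\underline q,\overline q)=(2,q)$ to produce an $f\in\hoelO{\theta,q}\setminus\B_{2,2}^\theta(\R,\gamma_1)$. However, you explicitly leave the first and decisive step as an ``expected'' comparison whose proof you defer to unspecified machinery, and that step is the entire content of the theorem beyond \cref{statement:sharp_embedding_Hoelder_Besov}. The paper closes it with a short concrete chain that your sketch does not contain: by \cite[Proposition 3.8]{Geiss:Nguyen:20} one has the representation $\cI^\alpha_t\varphi=\varphi_0+\int_{(0,t]}(1-u)^\alpha\,\od\varphi_u$, so It\^o's isometry gives
\[
\sup_{t\in[0,1)}\|\cI^\alpha_t\varphi-\varphi_0\|_{L_2}
=\Big(\E\int_0^1(1-u)^{2\alpha}\Big|\tfrac{\partial^2F}{\partial x^2}(u,W_u)\Big|^2\od u\Big)^{1/2},
\]
with $2\alpha=1-\theta$, and this supremum is trivially dominated by the $\BMO_2$-norm; finiteness of the weighted second-derivative integral is then exactly the characterisation of $\B_{2,2}^\theta(\R,\gamma_1)$ from \cite[Theorem 3.1]{Geiss:Toivola:15}. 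Your proposed alternative route---expressing $f(W_1)-f(W_t+W'_{1-t})$ through $\varphi$ via Clark--Ocone and matching dyadic scales to \eqref{eqn:K-functional_Gauss_space}---would amount to re-deriving that characterisation by hand and is nowhere carried out, so as written the argument has a genuine gap at its core.

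A secondary point: once the implication ``$\BMO_2$ membership $\Rightarrow$ $f\in\B_{2,2}^\theta$'' is established, you do not need to re-run the construction of the $f_N$, the randomised signs, or the open mapping argument at the level of $\BMO_2$-norms. \cref{statement:sharp_embedding_Hoelder_Besov} already delivers a single $f\in\hoeldO{\theta,q}\setminus\B_{2,\underline q}(\R^d,\gamma_d)$ (here with $d=1$, $\underline q=2$), and the contrapositive of the implication finishes the proof in one line. Your plan to transfer a quantitative lower bound through a $K$-functional-to-$\BMO_2$ comparison along the sequence $f_N$ is therefore both harder than necessary and dependent on a two-sided estimate that the paper never needs.
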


\begin{proof}
Assume $f\in \hoelO {\theta,q}$. From \cite[formula (3.3) of Proposition 3.8]{Geiss:Nguyen:20} one gets
\[ \cI^\alpha_t \varphi = \varphi_0 + \int_{(0,t]} (1-u)^\alpha \od \varphi_u
   \quad \mbox{a.s.} \] 
If $ \cI^\alpha \varphi - \varphi_0 \in \BMO_2([0,1))$ would hold, then
It\^o's isometry would give 
\begin{align*}
     \sqrt{\E \int_0^1 (1-u)^{1-\theta} \left | \frac{\partial^2 F}{\partial x^2} (u,W_u) \right |^2 \od u} 
& =  \sqrt{\E \int_0^1 (1-u)^{2\alpha} \left | \frac{\partial^2 F}{\partial x^2} (u,W_u) \right |^2 \od u} \\
& =  \sqrt{\E \int_0^1 (1-u)^{2\alpha} \od [\varphi]_u} \\
& =  \sup_{t\in [0,1)}\|\cI_t^\alpha \varphi - \varphi_0\|_{L_2} \\
&\le \| \cI^\alpha \varphi - \varphi_0 \|_{\BMO_2([0,1))}
  < \infty.
\end{align*}
By \cite[Theorem 3.1]{Geiss:Toivola:15} this would imply $f \in \B_{2,2}^\theta(\R,\gamma_1)$.
Now \cref{statement:sharp_embedding_Hoelder_Besov} 
applied to $(\underline{q},\overline{q})=(2,q)$ implies our statement.
\end{proof}


\bibliographystyle{amsplain}

\end{document}